\newtheorem{Theorem}{Theorem}[section]
\newtheorem{Lemma}[Theorem]{Lemma}
\newtheorem{Corollary}[Theorem]{Corollary}
\newtheorem{Proposition}[Theorem]{Proposition}
\newtheorem{Remark}[Theorem]{Remark}
\newtheorem{Definition}[Theorem]{Definition}
\def\NZQ{\mathbb}               
\def\NN{{\NZQ N}}
\def\ZZ{{\NZQ Z}}
\def\opn#1#2{\def#1{\operatorname{#2}}}
\opn\depth{depth}
\opn\dim{dim}
\opn\link{link}
\opn\type{type}
\opn\reg{reg}
\opn\height{height}
\opn\pd{pd}
\opn\Tor{Tor}
\opn\Ass{Ass}
\opn\multideg{multideg}
\def\FF{{\mathcal F}}
\def\PP{{\mathcal P}}
\numberwithin{equation}{section}
\begin{document}

\title{Sequentially Cohen-Macaulay\\ mixed product ideals}
\author{Giancarlo RINALDO }
\address{Giancarlo Rinaldo, Dipartimento di Matematica, Universit\`a di Messina, 
Viale Ferdinando Stagno d'Alcontres, 31, 98166 Messina. Italy}
\email{giancarlo.rinaldo@tiscali.it}

\subjclass[2000]{Primary 13H10;  Secondary 13P10}

\keywords{Mixed product ideals, sequentially Cohen-Macaulay rings, simplicial complexes}

\begin{abstract}
We classify the ideals of mixed products that are sequentially Cohen-Macaulay.
\end{abstract}

\maketitle

\section{Introduction}
\label{1}

The class of ideals of mixed products is a special class of square-free monomial ideals. They were first introduced by G.~Restuccia and R.~Villarreal (see \cite{RV} and \cite{Vi}), who studied the normality of such ideals.

In \cite{IR}, C.~Ionescu and G.~Rinaldo studied the Castelnuovo-Mumford regularity, the depth and dimension of mixed product ideals and characterize when they are Cohen-Macaulay. In \cite{Ri} the author calculated the Betti numbers of their finite free resolutions. In \cite{HN}, L. T. Hoa and N. D. Tam studied these ideals in a broader situation.

Let $S=K[\mathbf{x},\mathbf{y}]$ be a polynomial ring over a field $K$ in two disjoint
sets of variables $\mathbf{x}=\{x_1,\dots,x_n\}$, $\mathbf{y}=\{y_1,\dots,y_m\}$. The \textit{ideals of mixed products} are the proper ideals 
\begin{equation}\label{def:mixprod}
\sum_{i=1}^s  I_{q_i} J_{r_i}\ \ q_i,r_i\in \ZZ_{\geq 0} 
\end{equation}
where $I_{q_i}$ (resp. $J_{r_i}$) is the ideal of $S$ generated by all the
square-free monomials of degree $q_i$ (resp. $r_i$) in the variables $\mathbf{x}$
(resp. $\mathbf{y}$). We set  $I_0=J_0=S$ and $I_{q_i}=(0)$ (resp. $J_{r_i}=(0)$) if $q_i>n$ (resp. $r_i>m$). 
In the articles mentioned only two summands of \ref{def:mixprod} are allowed. In this article we classify the ideals of mixed product that are sequentially  Cohen-Macaluay and Cohen-Macaulay for any $s\in \NN$. Recently, a number of authors have been interested in classifying sequentially Cohen-Macaulay rings related to combinatorial structures (for example see \cite{Fa}, \cite{HH}, \cite{VV}). This paper is inserted in this area and the tools used are essentially Stanley-Reisner rings and Alexander dual.

In section \ref{sec:pre} we recall some preliminaries about simplicial complexes and questions related to commutative algebra. In section \ref{sec:Alex} we study the primary decomposition of mixed product ideals, we introduce the vectors $\bar{q}$, $\bar{r}$ that are uniquely determined by the values $q_i$, $r_i$ for $i=1,\ldots,s$, $m$ and $n$, and we classify the Cohen-Macaulay mixed product ideals in terms of the vectors $\bar{q}$ and $\bar{r}$. The vectors $\bar{q}$ and $\bar{r}$ are used also to  classify the sequentially Cohen-Macaulay mixed product ideals in the last section.

\section{Preliminaries}\label{sec:pre}
In this section we recall some concepts on simplicial complexes that we will use in the article (see \cite{BH}, \cite{MS}, \cite{St}).

Set $V = \{x_1, \ldots, x_n\}$. A \textit{simplicial complex} $\Delta$ on the vertex set $V$ is a collection of subsets of $V$
such that (i) $\{x_i\} \in \Delta$  for all $x_i \in V$ and (ii) $F \in \Delta$ and $G\subseteq F$ imply $G \in \Delta$. An
element $F \in \Delta$ is called a \textit{face} of $\Delta$. For $F \subset V$ we define the \textit{dimension} of $F$ by $\dim F
=  |F| -1$, where $|F|$ is the cardinality of the set $F$. A maximal face of $\Delta$  with respect to inclusion is
called a \textit{facet} of $\Delta$.
If all facets of $\Delta$ have the same dimension, then $\Delta$ is called \textit{pure}.

A simplicial complex $\Delta$ is called \textit{shellable} if the facets of $\Delta$ can be given a linear order $F_1, \ldots, F_t$ such that for all $1 \leq i < j \leq t$, there exist some $v \in F_j \setminus F_i$ and some $k \in \{1, \dots, j-1\}$ with $F_j\setminus F_k = \{v\}$.

Moreover, a pure simplicial complex $\Delta$ is \textit{strongly connected} if for every two facets $F$ and $G$ of $\Delta$  there is a sequence of facets $F = F_0, F_1, \ldots, F_t = G$ such that $\dim ( F_i \cap F_{i+1}) = \dim \Delta -1$ for each $i =0, \ldots, t-1$.

The \textit{Stanley--Reisner ideal} of $\Delta$, denoted by $I_{\Delta}$, is the squarefree monomial ideal of $S=K[x_1,\ldots,x_n]$ generated by 
\[
 \{x_{i_1} x_{i_2} \cdots x_{i_p} \,:\, 1 \le i_1 < \cdots < i_p \le n,\; 
\{x_{i_1},\ldots,x_{i_p}\} \notin \Delta \},  
\]
and $K[\Delta]= K[x_1,\ldots,x_n]/I_{\Delta}$ is called the \textit{Stanley--Reisner ring} of $\Delta$. It is known that
\begin{equation}\label{eq:stanleyideal}
  I_\Delta=\bigcap_{F\in \FF(\Delta)} P_F
\end{equation}
with $P_F=(\{x_1\,\ldots,x_n\}\setminus F)$.

Let $I=(\mathbf{x}^{\alpha_1},\ldots, \mathbf{x}^{\alpha_q})\subset K[\mathbf{x}]= K[x_1,\ldots,x_n]$ be a square-free  monomial ideal, with $\alpha_i=(\alpha_{i_1},\ldots,\alpha_{i_n})\in \{0,1\}^n$. The \textit{Alexander dual} of $I$ is the ideal  
\begin{equation}\label{eq:alexdual}
 I^*=\mathop\bigcap\limits_{i=1}^q \mathfrak{m}_{\alpha_i},  
\end{equation}
  where $\mathfrak{m}_{\alpha_i}=(x_j:\alpha_{i_j}=1)$. It is known that $(I^*)^*=I$. We also have that if  $I$, $J$ are squarefree monomial ideals of $S=K[x_1,\ldots,x_n]$ then
\begin{equation}\label{eq:sumdual}
 (I+J)^*=I^*\cap J^*.
\end{equation}

\section{Cohen-Macaulay mixed product ideals}\label{sec:Alex}
Let $S=K[x_1,\ldots,x_n,y_1,\ldots,y_m]$ be a polynomial ring over a field $K$ and let 
\begin{equation}\label{def:mixprodcopy}
\sum_{i=1}^s  I_{q_i} J_{r_i},\ \ q_i,r_i\in \ZZ_{\geq 0},\ s\in \NN, 
\end{equation}
be an ideal of mixed product as in \ref{def:mixprod}. In this section we study the primary decomposition of the ideal \ref{def:mixprodcopy} and give a criterion for its Cohen-Macaulayness. Under the assumption that no summands in \ref{def:mixprodcopy} is a subset of another summand, we  set
\begin{equation}\label{order:q}
 0 \leq q_1< q_2<\ldots<q_s\leq n.
\end{equation}
Under this assumption and because of the ordering \ref{order:q} we have
\begin{equation}\label{order:r}
 0\leq r_s< r_{s-1} <\ldots<r_1\leq m.
\end{equation}
Throughout this paper we always assume \ref{order:q} and \ref{order:r}. 
%
%

\begin{Proposition}\label{prop:dual}
Let $S=K[x_1,\ldots, x_n,y_1,\ldots,y_m]$, then 
\begin{equation}
(\sum_{i=1}^s  I_{q_i} J_{r_i})^*=I_{n-q_1+1} + \sum_{i=1}^{s-1}  I_{n-q_{i+1}+1} J_{m-r_i+1} + J_{m-r_s+1}. 
\end{equation}
\end{Proposition}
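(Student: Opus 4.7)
The plan is to combine two observations: first compute the Alexander dual of a single summand $I_{q_i} J_{r_i}$, then apply the formula (\ref{eq:sumdual}) iteratively (or simply as $s$-fold intersection) and analyze which squarefree monomials lie in the resulting intersection.

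First I would show the single-summand identity
\[
 (I_{q_i} J_{r_i})^* = I_{n-q_i+1} + J_{m-r_i+1}.
\]
Applying (\ref{eq:alexdual}) to the generators $x_\alpha y_\beta$ of $I_{q_i} J_{r_i}$ (with $|\alpha|=q_i$, $|\beta|=r_i$) gives an intersection of the monomial primes $(x_j : j\in\alpha)+(y_k : k\in\beta)$. A squarefree monomial $x_A y_B$ lies in this intersection exactly when, for every such pair $(\alpha,\beta)$, either $A\cap\alpha\neq\emptyset$ or $B\cap\beta\neq\emptyset$. The negation is that one can choose $\alpha\subseteq\{1,\ldots,n\}\setminus A$ with $|\alpha|=q_i$ and $\beta\subseteq\{1,\ldots,m\}\setminus B$ with $|\beta|=r_i$, which fails precisely when $|A|\geq n-q_i+1$ or $|B|\geq m-r_i+1$. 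The minimal such monomials generate $I_{n-q_i+1} + J_{m-r_i+1}$.

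Next, using (\ref{eq:sumdual}) (inductively if needed),
\[
 \Bigl(\sum_{i=1}^{s} I_{q_i} J_{r_i}\Bigr)^{*} = \bigcap_{i=1}^{s} \bigl(I_{n-q_i+1} + J_{m-r_i+1}\bigr).
\]
Set $a_i = n-q_i+1$ and $b_i = m-r_i+1$. The orderings (\ref{order:q}) and (\ref{order:r}) translate into $a_1 > a_2 > \cdots > a_s$ and $b_1 < b_2 < \cdots < b_s$. A squarefree monomial $x_A y_B$ (with $|A|=p$, $|B|=q$) belongs to $I_{a_i}+J_{b_i}$ iff $p\geq a_i$ or $q\geq b_i$, so membership in the full intersection is equivalent to requiring this disjunction for every $i$.

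The key observation is monotonicity: the set $\{i : p\geq a_i\}$ is an up-set in $\{1,\ldots,s\}$ while $\{i : q\geq b_i\}$ is a down-set. Hence the disjunction holds for every $i$ if and only if there is an index $i_0\in\{0,1,\ldots,s\}$ with $q\geq b_i$ for $i\leq i_0$ and $p\geq a_i$ for $i>i_0$. Splitting by cases $i_0=0$, $i_0=s$, and $1\leq i_0\leq s-1$ yields respectively the conditions $p\geq a_1$, $q\geq b_s$, and $p\geq a_{i_0+1}$ together with $q\geq b_{i_0}$. These correspond precisely to the minimal monomial generators of $I_{a_1}$, $J_{b_s}$, and $I_{a_{i_0+1}} J_{b_{i_0}}$ for $i_0=1,\ldots,s-1$. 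Summing gives the claimed formula. The only real obstacle is keeping the monotone case analysis in step three clean; once one notes that the disjunction is equivalent to a single threshold index $i_0$, the rest is bookkeeping.
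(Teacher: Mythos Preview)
Your argument is correct, but it proceeds quite differently from the paper's proof. The paper argues by induction on $s$: it peels off the last summand via $(I_{q_s}J_{r_s}+\sum_{i<s} I_{q_i}J_{r_i})^*=(I_{q_s}J_{r_s})^*\cap(\sum_{i<s} I_{q_i}J_{r_i})^*$, invokes the single-summand case (cited from \cite{Ri}) and the induction hypothesis, and then simplifies the resulting intersection using the modular law together with the inclusions $I_{n-q_s+1}\supset H$ and $J_{m-r_s+1}\supset J_{m-r_{s-1}+1}$. Your route is to expand the dual as the full $s$-fold intersection $\bigcap_i(I_{a_i}+J_{b_i})$ and test membership of a squarefree monomial $x_Ay_B$ directly, observing that the monotonicity of the $a_i$ and $b_i$ forces the two ``success sets'' to be an up-set and a down-set, whence the condition collapses to a single threshold index $i_0$. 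This avoids both the induction and the modular-law manipulation, and arguably makes the combinatorial reason for the formula more transparent; the paper's version, on the other hand, is shorter once the cited base case is granted and stays entirely within ideal-theoretic identities without ever unpacking monomial supports.
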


\begin{proof}
We prove the assertion by induction on $s$. If $s=1$ we have that either $q_1=0$ (resp. $r_1=0$) and $r_1\neq 0$ (resp. $q_1\neq 0$) or $q_1\neq 0$ and $r_1\neq 0$. The assertion for the first and the second case follows respectively by Proposition 2.2  and Corollary 2.4 of \cite{Ri}. Now suppose that 
\[
(\sum_{i=1}^{s-1}  I_{q_i} J_{r_i})^*=I_{n-q_1+1} + (\sum_{i=1}^{s-2}  I_{n-q_{i+1}} J_{m-r_i+1}) + J_{m-r_{s-1}+1}. 
\]
By equation \ref{eq:sumdual} we have
\[
 (I_{q_s}J_{r_s}+\sum_{i=1}^{s-1}  I_{q_i} J_{r_i})^*=(I_{q_s}J_{r_s})^*\cap(\sum_{i=1}^{s-1}  I_{q_i} J_{r_i})^*
\]
that is equal to, by Corollary 2.4 of \cite{Ri} and induction hypothesis, 
\begin{equation}\label{int:1}
 (I_{n-q_s+1}+J_{m-r_s+1})\cap(I_{n-q_1+1} + \sum_{i=1}^{s-2}  I_{n-q_{i+1}+1} J_{m-r_i+1} + J_{m-r_{s-1}+1}).
\end{equation}
We observe,  since $s>1$ and $q_s>q_i$ for all $i$, that $q_s\neq 0$. Let $H=I_{n-q_1+1} + \sum_{i=1}^{s-2}  I_{n-q_{i+1}+1} J_{m-r_i+1}$.
If we apply the modular law to \ref{int:1} we have
\[
 (I_{n-q_s+1} + J_{m-r_s+1})\cap H + (I_{n-q_s+1} + J_{m-r_s+1})\cap J_{m-r_{s-1}+1}.
\]
Since by hypothesis $q_i<q_s\leq n$ we  have $I_{n-q_s+1} \supset H$ and observing that $J_{m-r_{s}+1}\supset J_{m-r_{s-1}+1}$, the assertion follows easily.
\end{proof}
\begin{Remark}
 By Proposition \ref{prop:dual} we have that the class of mixed pruduct ideals with a finite set of summand is closed under Alexander duality (see also \cite{Ri}, Remark 2.5).
\end{Remark}

\begin{Corollary}\label{cor:prdec}
Let $S=K[x_1,\ldots, x_n,y_1,\ldots,y_m]$ and let
\[
 \mathcal{X}_i=\{X\subset \{x_1,\ldots,x_n\}: |X|=i\},  \mathcal{Y}_j=\{Y\subset \{y_1,\ldots,y_m\}: |Y|=j\},
\]
with $\mathcal{X}_i=\emptyset$ if $i>n$ and $\mathcal{Y}_j=\emptyset$ if $j>m$. 
Then
\[
\sum_{i=1}^s  I_{q_i} J_{r_i}  = \PP_x \cap \PP_{xy}  \cap \PP_y
\]
where
\[
 \PP_{x}=\mathop\bigcap_{X\in \mathcal{X}_{n-q_1+1} } (X),\hspace{1cm} \PP_{y}=\mathop\bigcap_{Y\in \mathcal{Y}_{m-r_s+1} } (Y),
\]
\[
 \PP_{xy}=\bigcap_{i=1}^{s-1} \left(\bigcap_{X,Y} ((X)+(Y))\right),\, X\in \mathcal{X}_{n-q_{i+1}+1},\,Y\in \mathcal{Y}_{m-r_{i}+1} .
\]

\end{Corollary}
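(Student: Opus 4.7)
The plan is to obtain the primary decomposition by applying Alexander duality twice, combined with the explicit formula for the dual established in Proposition \ref{prop:dual}. Recall that for a squarefree monomial ideal $I=(\mathbf{x}^{\alpha_1},\ldots,\mathbf{x}^{\alpha_q})$, formula \ref{eq:alexdual} says $I^*=\bigcap_i \mathfrak{m}_{\alpha_i}$, and $(I^*)^*=I$. Hence if we write the dual ideal as a sum of squarefree monomial ideals, listing its generators yields a primary decomposition of the original ideal after one more application of \ref{eq:alexdual}.

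First I would set $I=\sum_{i=1}^s I_{q_i} J_{r_i}$ and apply Proposition \ref{prop:dual} to obtain
\[
 I^*=I_{n-q_1+1} + \sum_{i=1}^{s-1} I_{n-q_{i+1}+1} J_{m-r_i+1} + J_{m-r_s+1}.
\]
Next I would enumerate the minimal squarefree monomial generators of $I^*$. The generators of $I_{n-q_1+1}$ are exactly the products $\prod_{x\in X} x$ with $X\in \mathcal{X}_{n-q_1+1}$; the generators of each $I_{n-q_{i+1}+1} J_{m-r_i+1}$ are the products $\prod_{x\in X} x \cdot \prod_{y\in Y} y$ with $X\in \mathcal{X}_{n-q_{i+1}+1}$ and $Y\in \mathcal{Y}_{m-r_i+1}$; and similarly for $J_{m-r_s+1}$ with $Y\in \mathcal{Y}_{m-r_s+1}$. (The conventions $I_k=(0)$ for $k>n$ and $J_k=(0)$ for $k>m$ correspond to the empty collections $\mathcal{X}_k=\emptyset$, $\mathcal{Y}_k=\emptyset$, which is why the corollary's statement includes exactly this caveat.)

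Then I would apply \ref{eq:alexdual} to $I^*$: for each generator $\mathbf{x}^{\alpha}$ the corresponding prime $\mathfrak{m}_{\alpha}$ is the ideal generated by the variables appearing in $\mathbf{x}^{\alpha}$. Thus the generator $\prod_{x\in X} x$ contributes the prime $(X)$; the generator $\prod_{x\in X} x \cdot \prod_{y\in Y} y$ contributes $(X)+(Y)$; and $\prod_{y\in Y} y$ contributes $(Y)$. Intersecting over all generators groups these contributions into exactly $\PP_x$, $\PP_{xy}$, and $\PP_y$, giving
\[
 I=(I^*)^* = \PP_x \cap \PP_{xy} \cap \PP_y,
\]
as required.

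I do not anticipate a serious obstacle here, since the heavy lifting has already been done in Proposition \ref{prop:dual}; the only care needed is bookkeeping—one must check that the three kinds of generators of $I^*$ really are all squarefree and minimal, so that \ref{eq:alexdual} applies directly and no redundant primes appear. Non-minimal overlaps (for instance a generator of $I_{n-q_1+1}$ dividing a generator of some $I_{n-q_{i+1}+1} J_{m-r_i+1}$) would only produce redundant primes in the intersection, which can be removed without affecting the equality.
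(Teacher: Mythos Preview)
Your proposal is correct and follows essentially the same route as the paper: apply Proposition~\ref{prop:dual} to identify $I^*$, then use \ref{eq:alexdual} together with $(I^*)^*=I$ to read off the primary decomposition from the generators of $I^*$. The paper's proof is just a two-line version of what you wrote; your extra bookkeeping about minimality is harmless (redundant generators would only yield redundant primes in the intersection) but not strictly needed.
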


\begin{proof}
 By Alexander duality and Proposition \ref{prop:dual} we have that
\[
\sum_{i=1}^s  I_{q_i} J_{r_i}=(I_{n-q_1+1} + \sum_{i=1}^{s-1}  I_{n-q_{i+1}+1} J_{m-r_i+1} + J_{m-r_s+1})^*.
\]
By equation \ref{eq:alexdual} the assertion follows.
\end{proof}

\begin{Corollary}\label{cor:unmixed}
Let $S=K[x_1,\ldots,x_n,y_1,\ldots,y_m]$ and let  $\sum_{i=1}^s  I_{q_i} J_{r_i}$  
be the mixed product ideal on the ring $S$.
Let $h=\height \sum_{i=1}^s  I_{q_i} J_{r_i}$, then the ideal is unmixed if and only if the following conditions are satisfied:
\begin{enumerate}
 \item $m+n-(q_{i+1}+r_i)+2=h$, $\forall i=1,\ldots,s-1$;
 \item if $q_1> 0$ then $n-q_1+1=h$;
 \item if $r_s> 0$ then $m-r_s+1=h$.
\end{enumerate}
\end{Corollary}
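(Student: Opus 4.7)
The plan is to exploit the prime decomposition from Corollary \ref{cor:prdec}, together with the fact that a squarefree monomial ideal is radical, so that its associated primes coincide with its minimal primes. Unmixedness is then equivalent to all minimal primes having the same height $h$.

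First I would read off the heights of the primes appearing in Corollary \ref{cor:prdec}. The factors of $\PP_x$ have height $n-q_1+1$ and are present only when $q_1 \geq 1$ (otherwise $\mathcal{X}_{n-q_1+1}=\emptyset$ and $\PP_x$ does not contribute). The factors of $\PP_y$ have height $m-r_s+1$ and are present only when $r_s \geq 1$. For each $i=1,\ldots,s-1$, the factors of $\PP_{xy}$ at index $i$ have height $(n-q_{i+1}+1)+(m-r_i+1)=m+n-q_{i+1}-r_i+2$.

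Next I would show that the decomposition is irredundant, so that every prime listed is actually a minimal prime. A containment $P \subseteq Q$ between two of these monomial primes forces the $x$-variables generating $P$ to lie among those of $Q$, and likewise for $y$-variables. The strict orderings \ref{order:q} and \ref{order:r} then rule out all such containments: a pure $\PP_x$-prime is generated by $n-q_1+1$ variables, strictly more than the $n-q_{i+1}+1$ appearing in the $x$-part of any mixed prime, and it lacks the $y$-variables required to sit inside a mixed or pure $\PP_y$-prime; the symmetric argument handles $\PP_y$. For two mixed primes at indices $i<j$ in $\PP_{xy}$, the orderings give $|X_i|>|X_j|$ but $|Y_i|<|Y_j|$, so neither can contain the other.

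With all listed primes identified as minimal, the ideal is unmixed iff their heights all coincide with $h$, and the three height formulas translate precisely into conditions (1), (2), (3); conditions (2) and (3) are imposed only conditionally because the corresponding pure families are absent exactly when $q_1=0$ or $r_s=0$. The main obstacle will be the irredundancy verification: although each case is elementary, one must carefully check all cross- and intra-family inclusions using both orderings simultaneously, and handle the boundary cases so that only the height equalities corresponding to \emph{actually present} families of primes appear in the final criterion.
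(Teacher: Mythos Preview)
Your proposal is correct and follows the approach the paper intends: the paper states this corollary with no proof, treating it as an immediate consequence of the primary decomposition in Corollary~\ref{cor:prdec}, and your argument---reading off the heights of the three families of primes and checking that the decomposition is irredundant---is precisely the natural way to fill in the details. The irredundancy verification you flag as the main obstacle is indeed routine (the orderings \ref{order:q} and \ref{order:r} do the work), and the paper simply takes it for granted; one small slip in your phrasing is that a $\PP_x$-prime ``lacking $y$-variables'' is what prevents a mixed prime from sitting inside \emph{it}, not the other way around, but the intended two-sided check is clear.
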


\begin{Definition}\label{def:qrvec}
Let $S=K[x_1,\ldots,x_n,y_1,\ldots,y_m]$ and let  $\sum_{i=1}^s  I_{q_i} J_{r_i}$  
be the mixed product ideal on the ring $S$. We define $s'\in \NN$ such that
\[
 s'=\left\{\begin{array}{ll}
            s+1 & \mbox{ if } q_1> 0 \mbox{ and } r_s> 0\\
            s & \mbox{ if } q_1> 0 \mbox{ and } r_s= 0 \mbox{ or }q_1= 0\mbox{ and } r_s> 0 \\
            s-1 & \mbox{ if } q_1=r_s=0
           \end{array}
    \right.
\]
and the two vectors $\bar{q}=(q(1),\ldots,q(s'))$, $\bar{r}=(r(1),\ldots,r(s'))\in \ZZ_{\geq 0}^{s'}$ such that
\[
 q(i)=\left\{\begin{array}{ll}
            q_i -1 & \mbox{ if } q_1> 0\\
            q_{i+1}-1 & \mbox{ if } q_1= 0
           \end{array}
    \right.
\]
\[
 r(i)=\left\{\begin{array}{ll}
            r_{i-1} -1 & \mbox{ if } q_1> 0\\
            r_i-1 & \mbox{ if } q_1= 0
           \end{array}
    \right.
\]
with $i=1,\ldots,s'$ and $r_0=m+1$ and $q_{s+1}=n+1$.
\end{Definition}

\begin{Proposition}\label{rem:partition}
Let $S=K[x_1,\ldots, x_n,y_1,\ldots,y_m]$ and let $I_\Delta=\sum_{i=1}^s  I_{q_i} J_{r_i}$ be a mixed product ideal.
Using the notation of Definition \ref{def:qrvec} there exists a partition of $\FF(\Delta)$, $\FF(\Delta)=\FF_1(\Delta)\cup \ldots \cup \FF_{s'}(\Delta)$ such that

\begin{equation}\label{eq:partition}
\begin{split}
\FF_k(\Delta) = &  \{\{x_{i_1},\ldots, x_{i_{q(k)}},y_{j_1},\ldots, y_{j_{r(k)}}\}: \\
               &      1\leq i_1<\ldots<i_{q(k)}\leq n,\, 1\leq j_1<\ldots<j_{r(k)}\leq m\},
\end{split}
\end{equation}
with $k=1,\ldots,s'$. 
\end{Proposition}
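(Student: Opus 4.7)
The plan is to read off the facets of $\Delta$ directly from the primary decomposition supplied by Corollary \ref{cor:prdec}, using the standard Stanley--Reisner correspondence recalled in \ref{eq:stanleyideal}: the facets of $\Delta$ are in bijection with the minimal primes of $I_\Delta$ via $F \longleftrightarrow P_F = (V \setminus F)$, where $V = \{x_1, \ldots, x_n, y_1, \ldots, y_m\}$.

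First I would list the primes occurring in Corollary \ref{cor:prdec} and check that none contains another, so that all of them are in fact minimal and the correspondence with facets is genuine. The primes split into three blocks: the pure-$\mathbf{x}$ primes $(X)$ with $X \in \mathcal{X}_{n-q_1+1}$, the mixed primes $(X)+(Y)$ with $X \in \mathcal{X}_{n-q_{i+1}+1}$ and $Y \in \mathcal{Y}_{m-r_i+1}$ for $i = 1, \ldots, s-1$, and the pure-$\mathbf{y}$ primes $(Y)$ with $Y \in \mathcal{Y}_{m-r_s+1}$. Since within the mixed block the $X$-sizes strictly decrease in $i$ while the $Y$-sizes strictly increase (by \ref{order:q} and \ref{order:r}), a short direct check rules out all possible containments among and between the three blocks.

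Next I would take complements inside $V$. The complement of $(X)$ with $|X| = n-q_1+1$ is a face with $q_1-1$ variables from $\mathbf{x}$ and all $m$ variables from $\mathbf{y}$; the complement of $(X)+(Y)$ with $|X| = n-q_{i+1}+1$ and $|Y| = m-r_i+1$ is a face with $q_{i+1}-1$ variables from $\mathbf{x}$ and $r_i-1$ variables from $\mathbf{y}$; and the complement of $(Y)$ with $|Y| = m-r_s+1$ is a face with all $n$ variables from $\mathbf{x}$ and $r_s-1$ variables from $\mathbf{y}$. In each case one recovers \emph{every} subset of the prescribed shape, since $X$ and $Y$ range over all subsets of the required cardinality.

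Finally I would let $\FF_k(\Delta)$ be the set of facets coming from a single block (with the mixed block further split by the index $i$), and identify $\FF_k$ with the pair $(q(k), r(k))$ of Definition \ref{def:qrvec}. When $q_1 > 0$ and $r_s > 0$ there are $s+1$ blocks, indexed by $k = 1, \ldots, s+1$, and the conventions $r_0 = m+1$, $q_{s+1} = n+1$ make the formula $(q(k),r(k)) = (q_k - 1,\, r_{k-1} - 1)$ uniform across all blocks. When $q_1 = 0$ the pure-$\mathbf{x}$ block disappears (because $\mathcal{X}_{n+1} = \emptyset$), and similarly the pure-$\mathbf{y}$ block disappears when $r_s = 0$; the values of $s'$ and the shifts in $q(i),\,r(i)$ in Definition \ref{def:qrvec} are exactly what is needed to absorb these degeneracies. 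The hard part is not conceptual but notational: one must check that the four cases of $s'$, together with the boundary conventions $r_0 = m+1$ and $q_{s+1} = n+1$, realign the indexing consistently so that $(q(k), r(k)) = (|F \cap \mathbf{x}|,\, |F \cap \mathbf{y}|)$ for every $F \in \FF_k(\Delta)$ in each of the four cases.
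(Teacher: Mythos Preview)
Your proposal is correct and follows exactly the route the paper takes: the paper's proof is the single sentence ``By Corollary \ref{cor:prdec} and equation \ref{eq:stanleyideal} the assertion follows,'' and what you have written is precisely a careful unpacking of that sentence, including the minimality check and the case analysis for Definition \ref{def:qrvec} that the paper leaves implicit.
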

\begin{proof}
 By Corollary \ref{cor:prdec} and equation \ref{eq:stanleyideal} the assertion follows.

\end{proof}
From now on we associate to a mixed product ideal $\sum_{i=1}^s  I_{q_i} J_{r_i}$ the value $s'\in \NN$ and the vectors $\bar{q}=(q(1),\ldots,q(s'))$, $\bar{r}=(r(1),\ldots,r(s'))\in \ZZ_{\geq 0}^{s'}$ defined in \ref{def:qrvec}. We also give, for the sake of completeness,  a way to compute the sequences $0\leq q_1<\ldots<q_s\leq n$, $0\leq r_s<\ldots<r_1\leq m$ by the vectors $\bar{q}=(q(1),\ldots,q(s'))$ and $\bar{r}=(r(1),\ldots,r(s'))$. 

\begin{Definition}\label{def:qrseq}
Let $s'\in \NN$, $\bar{q}=(q(1),\ldots,q(s'))$, $\bar{r}=(r(1),\ldots,r(s'))\in\ZZ_{\geq 0}^{s'}$, with $0\leq q(1) <\ldots<q(s')\leq n$, $0\leq r(s') <\ldots<r(1)\leq m$. We define $s\in \NN$ such that
\[
 s=\left\{\begin{array}{ll}
            s'-1 & \mbox{ if } r(1)=m \mbox{ and } q(s')=n\\
            s' & \mbox{ if } r(1)=m \mbox{ and } q(s')< n \mbox{ or }r(1)<m \mbox{ and } q(s')=n \\
            s'+1 & \mbox{ if } r(1)<m \mbox{ and } q(s')< n
           \end{array}
    \right.
\]
and the two sequences $0\leq q_1<\ldots<q_s\leq n$, $0\leq r_s<\ldots<r_1\leq m$ such that
\[
 q_i=\left\{\begin{array}{ll}
            q(i)+1 & \mbox{ if } r(1)=m\\
            q(i-1)+1 & \mbox{ if } r(1)<m
           \end{array}
    \right.
\]
\[
 r_i=\left\{\begin{array}{ll}
            r(i+1) +1 & \mbox{ if } r(1)=m\\
            r(i)+1 & \mbox{ if } r(1)<m
           \end{array}
    \right.
\]
with $i=1,\ldots,s$ and $q(0)=r(s'+1)=-1$.
\end{Definition}

\begin{Lemma}\label{lem:intersection}
Let $S=K[x_1,\ldots, x_n,y_1,\ldots,y_m]$, $I_\Delta=\sum_{i=1}^s  I_{q_i} J_{r_i}$ be a mixed product ideal and keep the notation of Proposition \ref{rem:partition}. Then for each $F\in \FF_i(\Delta)$ and for each $G\in \FF_j(\Delta)$ with $1\leq i<j\leq s'$  we have
\[\dim F\cap G \leq q(i)+r(j)-1.\]
\end{Lemma}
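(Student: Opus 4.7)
The plan is to reduce the statement to a direct counting argument using the structure of each $\FF_k(\Delta)$ given in Proposition \ref{rem:partition}. The key observation is that a facet in $\FF_k(\Delta)$ contains exactly $q(k)$ variables from $\{x_1,\ldots,x_n\}$ and exactly $r(k)$ variables from $\{y_1,\ldots,y_m\}$, so the intersection $F \cap G$ decomposes as a disjoint union of its $x$-part and its $y$-part, which can be bounded separately.

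First I would record the monotonicity of $\bar{q}$ and $\bar{r}$: from Definition \ref{def:qrvec}, since $q_1 < q_2 < \ldots < q_s$ and $r_s < r_{s-1} < \ldots < r_1$, one reads off that $q(1) < q(2) < \ldots < q(s')$ while $r(s') < r(s'-1) < \ldots < r(1)$. Hence for $i < j$ we have $q(i) < q(j)$ and $r(i) > r(j)$.

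Next, write $F = F_x \cup F_y$ with $F_x \subseteq \{x_1,\ldots,x_n\}$, $F_y \subseteq \{y_1,\ldots,y_m\}$, $|F_x| = q(i)$, $|F_y| = r(i)$, and analogously $G = G_x \cup G_y$ with $|G_x| = q(j)$, $|G_y| = r(j)$. Then $F \cap G = (F_x \cap G_x) \cup (F_y \cap G_y)$, and this is a disjoint union since the $x$- and $y$-variables are disjoint sets. Now $F_x \cap G_x \subseteq F_x$ forces $|F_x \cap G_x| \leq q(i)$, while $F_y \cap G_y \subseteq G_y$ forces $|F_y \cap G_y| \leq r(j)$, using precisely that $i<j$ makes $q(i) = \min(q(i),q(j))$ and $r(j) = \min(r(i),r(j))$.

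Putting these together, $|F \cap G| \leq q(i) + r(j)$, and therefore $\dim(F \cap G) = |F \cap G| - 1 \leq q(i) + r(j) - 1$. There is no real obstacle here; the whole content of the lemma lies in the monotonicity of $\bar{q}$ and $\bar{r}$ and in the clean partition of facets supplied by Proposition \ref{rem:partition}. The lemma is stated in this asymmetric form (bounding by $q(i) + r(j)$ rather than by $q(j) + r(i)$) presumably so that it can be combined with further inequalities in the subsequent shellability/sequential Cohen-Macaulay arguments.
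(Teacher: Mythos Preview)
Your argument is correct and follows essentially the same approach as the paper: both proofs record the monotonicity $q(i)<q(j)$, $r(i)>r(j)$ coming from Definition~\ref{def:qrvec} together with the facet structure of Proposition~\ref{rem:partition}, and then bound $|F\cap G|$ by $q(i)+r(j)$. Your version simply makes explicit the decomposition into $x$- and $y$-parts that the paper leaves implicit in its phrase ``the assertion follows.''
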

\begin{proof}
 By Proposition \ref{rem:partition} we have $|F|=q(i)+r(i)$ and $|G|=q(j)+r(j)$. By the ordering in \ref{order:q} and \ref{order:r} and the Definition \ref{def:qrvec} we have $q(i)<q(j)$, $r(i)>r(j)$ for all $1\leq i<j\leq s'$  and the assertion follows.
\end{proof}

\begin{Lemma}\label{lem:strongpath}
Let $S=K[x_1,\ldots, x_n,y_1,\ldots,y_m]$ and let $I_\Delta=\sum_{i=1}^s  I_{q_i} J_{r_i}$ be a mixed product ideal and keep the notation of Proposition \ref{rem:partition}.
Let $I_\Delta$ be unmixed and let $F\in\FF_{i}(\Delta)$ and $G\in\FF_{j}(\Delta)$  with $\dim F \cap G=\dim \Delta-1$. If  $i<j$  (resp. $i>j$ ) then
\begin{enumerate}
 \item $j=i+1$ (resp. $j=i-1$);
 \item $q(i+1)=q(i)+1$ (resp. $q(i-1)=q(i)-1$);
 \item $r(i+1)=r(i)-1$ (resp. $r(i-1)=r(i)+1$).
\end{enumerate}
\end{Lemma}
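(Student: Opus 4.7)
The plan is to exploit unmixedness to force $c := q(k)+r(k)$ to be constant in $k$, then count $x$- and $y$-variables in $F\cap G$ separately, and finally use strict monotonicity of $\bar q$ and $\bar r$ to pin down $j = i\pm 1$.

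First I would observe that by Proposition \ref{rem:partition} every facet of $\FF_k(\Delta)$ has cardinality $q(k)+r(k)$. Since $I_\Delta$ is unmixed, $\Delta$ is pure, so $c:=q(k)+r(k)$ is independent of $k\in\{1,\dots,s'\}$. In particular $|F|=|G|=c$, and the hypothesis $\dim F\cap G=\dim\Delta-1$ translates to $|F\cap G|=c-1$.

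Next, assuming $i<j$, I would split the intersection into its $x$- and $y$-parts: set $a=|F\cap G\cap\{x_1,\dots,x_n\}|$ and $b=|F\cap G\cap\{y_1,\dots,y_m\}|$, so $a+b=c-1$. Because $F$ contains only $q(i)$ variables of type $x$, we have $a\le q(i)$; because $G$ contains only $r(j)$ variables of type $y$ (and $r(j)<r(i)$ by Definition \ref{def:qrvec}), we have $b\le r(j)$. Substituting $c=q(i)+r(i)$ into $a+b=c-1$ and using $a\le q(i)$ gives $r(j)\ge r(i)-1$; combined with the strict decrease $r(j)<r(i)$ this forces $r(j)=r(i)-1$. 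The symmetric substitution $c=q(j)+r(j)$ together with $b\le r(j)$ yields $q(j)=q(i)+1$.

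Finally, since $\bar q$ is a strictly increasing sequence of integers with $q(j)=q(i)+1$, no index can fit strictly between $i$ and $j$, so $j=i+1$. The case $i>j$ then follows by swapping the roles of $F$ and $G$ and reapplying the argument above. There is no serious obstacle here; the argument is essentially a partition-and-count. The only point that requires care is pairing the bounds correctly, namely $a\le q(i)$ together with $b\le r(j)$ (not $b\le r(i)$), which is precisely what turns a counting inequality into the desired equality.
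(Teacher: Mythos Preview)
Your argument is correct and is essentially the same as the paper's proof. The only cosmetic difference is that the paper cites Lemma~\ref{lem:intersection} for the bound $|F\cap G|\le q(i)+r(j)$, whereas you re-derive that bound inline by splitting the intersection into its $x$- and $y$-parts; the remaining deductions (forcing $r(j)=r(i)-1$, $q(j)=q(i)+1$, and hence $j=i+1$ by strict monotonicity) coincide.
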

\begin{proof}
 (1) We assume $i<j$. By Lemma \ref{lem:intersection} and since $\Delta$ is pure we have the following inequality
\begin{equation}\label{eq:ineq}
 \dim F\cap G =\dim \Delta-1= q(i)+r(i)-2 \leq  q(i)+r(j)-1.
\end{equation}

Since $r(j)>r(i)$ and by the inequality \ref{eq:ineq} we obtain $r(j)<r(i)\leq r(j)+1$ that is $r(i)=r(j)+1$. Therefore $j=i+1$ and $r(i+1)=r(i)-1$. By similar arguments we easily complete the proof of the assertion.
\end{proof}

\begin{Lemma}\label{lem:shellable}
Let $S=K[x_1,\ldots, x_n,y_1,\ldots,y_m]$ and let $I_\Delta=\sum_{i=1}^s  I_{q_i} J_{r_i}$ be a mixed product ideal.
If $q(i+1)=q(i)+1$ for $i=1,\ldots,s'-1$ then $\Delta$ shellable.
\end{Lemma}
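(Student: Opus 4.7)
The plan is to order the facets of $\Delta$ by class, listing those of $\FF_1(\Delta)$ first, then those of $\FF_2(\Delta)$, and so on up to $\FF_{s'}(\Delta)$; within each class I will use a shelling of the pure simplicial complex $\Delta_k$ whose facets are exactly $\FF_k(\Delta)$. Such a within-class shelling exists because, in view of the description in Proposition \ref{rem:partition}, $\Delta_k$ is isomorphic to the join of the $(q(k)-1)$-skeleton of the simplex on $\{x_1,\ldots,x_n\}$ with the $(r(k)-1)$-skeleton of the simplex on $\{y_1,\ldots,y_m\}$, and skeleta of simplices are shellable, as is any join of shellable complexes.

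Having fixed the global order $F_1,\ldots,F_t$, the shelling condition has to be verified for every pair $F_i<F_j$. I write $F_i=X_i\cup Y_i$ and $F_j=X_j\cup Y_j$ with $X_i,X_j\subseteq\{x_1,\ldots,x_n\}$ and $Y_i,Y_j\subseteq\{y_1,\ldots,y_m\}$. If $F_i,F_j$ lie in the same class $\FF_{k'}(\Delta)$, the chosen within-class shelling directly yields a vertex $v\in F_j\setminus F_i$ and a facet $H\in\FF_{k'}(\Delta)$ preceding $F_j$ with $F_j\setminus H=\{v\}$. If instead $F_i\in\FF_l(\Delta)$ and $F_j\in\FF_{k'}(\Delta)$ with $l<k'$, the strict inequality $|X_i|=q(l)<q(k')=|X_j|$ gives $X_j\setminus X_i\neq\emptyset$, so I pick any $v\in X_j\setminus X_i\subseteq F_j\setminus F_i$. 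Using the hypothesis $q(k')=q(k'-1)+1$, the set $X_j\setminus\{v\}$ has exactly $q(k'-1)$ elements, and since $r(k'-1)>r(k')=|Y_j|$ I enlarge $Y_j$ to some $Y_H\subseteq\{y_1,\ldots,y_m\}$ with $|Y_H|=r(k'-1)$ and $Y_H\supseteq Y_j$. Then $H:=(X_j\setminus\{v\})\cup Y_H\in\FF_{k'-1}(\Delta)$ precedes $F_j$ in the global order, and $F_j\setminus H=\{v\}$ because $Y_j\subseteq Y_H$.

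The main obstacle is this cross-class step: the construction of $H$ hinges on $X_j\setminus\{v\}$ having precisely the right cardinality to match a facet in some earlier class, which is exactly what the hypothesis $q(k')=q(k'-1)+1$ supplies. Without that assumption the cardinality $q(k')-1$ would fail to coincide with any $q(l)$ for $l<k'$, and no facet of the required form would exist among the earlier classes, breaking the construction. Once this cross-class step is in place, both the within-class bookkeeping and the verification that $F_j\setminus H=\{v\}$ are routine.
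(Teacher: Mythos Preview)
Your proof is correct and follows essentially the same strategy as the paper: order the facets class by class, $\FF_1(\Delta),\ldots,\FF_{s'}(\Delta)$, and for the cross-class case pick an $x$-vertex $v\in X_j\setminus X_i$ and use the hypothesis $q(k')=q(k'-1)+1$ together with $r(k'-1)>r(k')$ to build a facet $H\in\FF_{k'-1}(\Delta)$ with $F_j\setminus H=\{v\}$. The only difference is in the within-class step: the paper writes down an explicit lexicographic order on each $\FF_k(\Delta)$ and checks the shelling condition by hand, whereas you invoke the standard facts that skeleta of simplices are shellable and that the join of pure shellable complexes is shellable. Both routes are valid; yours is slightly more conceptual but relies on external results, while the paper's is self-contained.
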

\begin{proof}
We consider the partition $\FF(\Delta)=\FF_1(\Delta)\cup \ldots \cup \FF_{s'}(\Delta)$ defined in Proposition \ref{rem:partition}. We set a linear order $\prec$ on the facets $\FF(\Delta)$ such that $F\prec G$ with $F\in\FF_k(\Delta)$, $G\in\FF_{k'}(\Delta)$ if either  $k<k'$ or $k=k'$ with  
\[
 F=\{x_{i_1},\ldots, x_{i_{q(k)}}, y_{j_1},\ldots, y_{j_{r(k)}}\},\, G=\{x_{i'_1},\ldots, x_{i'_{q(k)}}, y_{j'_1},\ldots, y_{j'_{r(k)}}\},
\]
$1\leq i_1< \ldots <i_{q(k)}\leq n,\, 1\leq j_1<\ldots<j_{r(k)}\leq m$, $1\leq i'_1< \ldots <i'_{q(k)}\leq n,\, 1\leq j'_1<\ldots<j'_{r(k)}\leq m$
and there exists $p$, $1\leq p\leq q(k)$, such that  $i_k=i'_k$ for $k=1,\ldots,p-1$  but $i_p<i_p'$ or $i_k=i'_k$ for all $k=1,\ldots,q(k)$ and exists $p'$, $1\leq p'\leq r(k)$, such that  $j_k=j'_k$ for $k=1,\ldots,p'-1$  but $j_{p'}<j'_{p'}$.

Suppose $F\prec G$ with $F\in \FF_{i}(\Delta)$ and $G\in \FF_{j}(\Delta)$ with $i<j$. Since $q(i)<q(j)$ there exists  $x_k\in G\setminus F$. Now let $G_k=G\setminus \{x_k\}$. We observe that there exists  $F_k\in \FF_{j-1}(\Delta)$ such that $F_k\supset G_k$, in fact  by hypothesis $q(j-1)=q(j)-1$ and $r(j-1)>r(j)$. Hence $G\setminus F_k=\{x_k\}$.

Suppose $F\prec G$ with $F, G\in \FF_{i}(\Delta)$. We may assume $x_k\in G\setminus F$, in fact if such $x_k$ does not exist we can consider the case $y_k\in G\setminus F$ in an analogous way. Since $F\prec G$ there exists $x_{k'}\in F\setminus G$ such that $k'<k$.  We set $F_k=(G\setminus\{x_k\}) \cup \{x_{k'}\}$. We observe that $F_k\in \FF_{i}(\Delta)$, $F_k\prec G$ and $G\setminus F_k=\{x_k\}$. The assertion follows.
\end{proof}

By the same argument we have the following
\begin{Lemma}\label{lem:shellable1}
Let $S=K[x_1,\ldots, x_n,y_1,\ldots,y_m]$ and let $I_\Delta=\sum_{i=1}^s  I_{q_i} J_{r_i}$ be a mixed product ideal.
If $r(i+1)=r(i)-1$ for $i=1,\ldots,s'-1$ then $\Delta$ is shellable.
\end{Lemma}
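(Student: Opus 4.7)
The plan is to mirror the proof of Lemma \ref{lem:shellable} with the roles of the $x$-variables and $y$-variables interchanged. The partition $\FF(\Delta)=\FF_1(\Delta)\cup\cdots\cup\FF_{s'}(\Delta)$ from Proposition \ref{rem:partition} treats the two variable blocks symmetrically, and the hypothesis $r(i+1)=r(i)-1$ is the mirror image of $q(i+1)=q(i)+1$; so the same structural argument will go through, provided the shelling order is chosen in the corresponding mirror form.

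Concretely, I would define a linear order $\prec$ on $\FF(\Delta)$ by putting the blocks in \emph{reverse} order: for $F\in\FF_k(\Delta)$ and $G\in\FF_{k'}(\Delta)$, set $F\prec G$ whenever $k>k'$, so that $\FF_{s'}(\Delta)$ comes first and $\FF_1(\Delta)$ last. Within a single block, use the lex order that compares the $y$-indices $(j_1,\ldots,j_{r(k)})$ first and uses the $x$-indices only as a tie-breaker. For the inter-block step, take $F\prec G$ with $F\in\FF_i(\Delta)$, $G\in\FF_j(\Delta)$ and $i>j$. Since $r(j)>r(i)$, there exists some $y_k\in G\setminus F$. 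The set $G\setminus\{y_k\}$ has $q(j)$ $x$-variables and $r(j)-1=r(j+1)$ $y$-variables (using the hypothesis). Since $q(j+1)>q(j)$, extend $G\setminus\{y_k\}$ to a facet $F_k\in\FF_{j+1}(\Delta)$ by adjoining any $q(j+1)-q(j)$ unused $x$-variables. By construction $F_k\prec G$ (because $j+1>j$) and $G\setminus F_k=\{y_k\}$, which is the required shelling condition. The intra-block case, with $F,G\in\FF_i(\Delta)$ and $F\prec G$, is handled by the same swap trick as before: if there is some $y_k\in G\setminus F$, pick the lex witness $y_{k'}\in F\setminus G$ with $k'<k$ and set $F_k=(G\setminus\{y_k\})\cup\{y_{k'}\}$; if instead the $y$-parts of $F$ and $G$ coincide, the order is broken by the $x$-parts and the identical argument with $x_k$ and $x_{k'}$ produces the needed $F_k\in\FF_i(\Delta)$.

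I expect no genuine obstacle here, just bookkeeping. The one step worth double-checking is that there are enough unused $x$-variables to realise the extension $G\setminus\{y_k\}\to F_k\in\FF_{j+1}(\Delta)$, i.e.\ that $q(j+1)\leq n$, which is automatic from the definition of the block $\FF_{j+1}(\Delta)$. The other point where mistakes are easy to make is handling the intra-block tie-breaking correctly, so I would state clearly which of the $y$- or $x$-components witnesses the strict inequality in the lex order and invoke that component throughout the construction of $F_k$. With these two items carefully checked, the proof is essentially a transcription of Lemma \ref{lem:shellable} under the substitution $x\leftrightarrow y$, $q\leftrightarrow r$, and reversed block order.
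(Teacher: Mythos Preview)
Your proposal is correct and is precisely the symmetric argument the paper intends: the paper itself offers no separate proof for Lemma~\ref{lem:shellable1} beyond the remark ``By the same argument,'' and what you have written is exactly that mirror of the proof of Lemma~\ref{lem:shellable} under $x\leftrightarrow y$, $q\leftrightarrow r$, with the block order reversed. The two checkpoints you flag (availability of enough $x$-variables for the extension, and the correct handling of the lex tie-break within a block) are the only places requiring care, and both go through for the reasons you indicate.
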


\begin{Theorem}\label{theo:strconn}
Let $S=K[x_1,\ldots, x_n,y_1,\ldots,y_m]$, $I_\Delta=\sum_{i=1}^s  I_{q_i} J_{r_i}$ be a mixed product ideal, $K[\Delta]=S/I_\Delta$.
The following conditions are equivalent:
\begin{enumerate}
 \item $q(i+1)=q(i)+1$ and $r(i+1)=r(i)-1$ for $i=1,\ldots,s'-1$;
 \item $\Delta$ is pure shellable;
 \item $K[\Delta]$ is Cohen-Macaulay;
 \item $\Delta$ is strongly connected.
\end{enumerate}
\end{Theorem}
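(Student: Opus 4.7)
The plan is to prove the cyclic chain $(1)\Rightarrow(2)\Rightarrow(3)\Rightarrow(4)\Rightarrow(1)$, using the partition of $\FF(\Delta)$ from Proposition \ref{rem:partition} together with the three preceding lemmas.

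For $(1)\Rightarrow(2)$, I would first note that $(1)$ forces $q(k)+r(k)$ to be constant in $k$, so by Proposition \ref{rem:partition} every facet of $\Delta$ has the same cardinality, i.e.\ $\Delta$ is pure. Shellability then follows immediately from Lemma \ref{lem:shellable}, whose hypothesis $q(i+1)=q(i)+1$ is one half of $(1)$. The implication $(2)\Rightarrow(3)$ is the classical theorem that any pure shellable simplicial complex is Cohen-Macaulay over any field (see \cite{BH}, \cite{St}). For $(3)\Rightarrow(4)$ I would invoke the standard fact that a Cohen-Macaulay Stanley-Reisner ring forces $\Delta$ to be pure (all minimal primes have equal height) and, via Reisner's criterion applied to the link of any codimension-two face, connected in codimension one, which is precisely strong connectedness.

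The heart of the proof is $(4)\Rightarrow(1)$. Assume $\Delta$ is strongly connected, hence in particular pure, so that Lemma \ref{lem:strongpath} applies. Pick $F\in\FF_1(\Delta)$ and $G\in\FF_{s'}(\Delta)$ (both are nonempty by Proposition \ref{rem:partition} and Definition \ref{def:qrvec}) and let $F=F_0,F_1,\ldots,F_t=G$ be a sequence of facets with $\dim(F_k\cap F_{k+1})=\dim\Delta-1$. For each $k$ define $\iota(F_k)\in\{1,\ldots,s'\}$ to be the unique index with $F_k\in\FF_{\iota(F_k)}(\Delta)$; Lemma \ref{lem:strongpath} gives $|\iota(F_{k+1})-\iota(F_k)|\leq 1$. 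Since $\iota(F_0)=1$ and $\iota(F_t)=s'$, the integer sequence $(\iota(F_k))_{k}$ takes every value in $\{1,\ldots,s'\}$, and for each $i\in\{1,\ldots,s'-1\}$ there is some $k$ with $\{\iota(F_k),\iota(F_{k+1})\}=\{i,i+1\}$. Lemma \ref{lem:strongpath} then yields $q(i+1)=q(i)+1$ and $r(i+1)=r(i)-1$, which is exactly $(1)$.

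The only step that relies on material outside the excerpt is $(3)\Rightarrow(4)$, where one has to quote the standard consequence of Reisner's criterion that a Cohen-Macaulay complex is strongly connected; this is the step I expect to require the most care in citation. Everything else reduces to the combinatorics of the partition $\FF(\Delta)=\FF_1(\Delta)\cup\cdots\cup\FF_{s'}(\Delta)$ and the three preceding lemmas, and in particular no new primary-decomposition calculation is needed.
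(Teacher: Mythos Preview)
Your proof is correct and follows the same cyclic chain $(1)\Rightarrow(2)\Rightarrow(3)\Rightarrow(4)\Rightarrow(1)$ as the paper, invoking Lemma~\ref{lem:shellable} for $(1)\Rightarrow(2)$ and Lemma~\ref{lem:strongpath} for $(4)\Rightarrow(1)$. The only cosmetic difference is in $(4)\Rightarrow(1)$: the paper fixes each $i$ separately, takes a chain from some $F\in\FF_i(\Delta)$ to some $G\in\FF_{i+1}(\Delta)$, and uses Lemma~\ref{lem:intersection} directly to pin down a step crossing from $\bigcup_{j\le i}\FF_j$ to $\bigcup_{j\ge i+1}\FF_j$, whereas you take a single chain from $\FF_1$ to $\FF_{s'}$ and extract all the crossings at once via the step-size bound $|\iota(F_{k+1})-\iota(F_k)|\le 1$ furnished by Lemma~\ref{lem:strongpath}.
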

\begin{proof}
(1)$\Rightarrow$(2). By Lemma \ref{lem:shellable} (or equivalently \ref{lem:shellable1}) we have that $K[\Delta]$ is shellable.
We observe that $q(i+1)+r(i+1)=q(i)+r(i)$ for $i=1,\ldots,s'-1$. Hence  $\Delta$ is pure.

(2)$\Rightarrow$(3). Always true.

(3)$\Rightarrow$(4). Always true.

(4)$\Rightarrow$(1). Let $i=1,\ldots,s'-1$ and let $F\in \FF_i(\Delta)$ and $G\in \FF_{i+1}(\Delta)$. Since $\Delta$ is strongly connected there exists a sequence of facets $F=F_0,F_1,\ldots,F_t=G$ such that $\dim F_{k}\cap F_{k+1}=\dim \Delta-1$ for $k=0,\ldots,t-1$. We observe that there exists $k\in\{0,\ldots,t-1\}$ such that
\[
 F_k\in \bigcup_{j\leq i} \FF_j(\Delta),\hspace{1cm} F_{k+1}\in \bigcup_{j\geq i+1} \FF_j(\Delta).
\]
Let $F_k\in \FF_{i-d}(\Delta)$ and $F_{k+1}\in \FF_{i+1+d'}(\Delta)$ with $0\leq d\leq i-1$, $0\leq d'\leq s'-i-1$.  Since $q(i-d)\leq q(i)-d$ and $r(i+1+d')\leq r(i)-1-d'$, by Lemma \ref{lem:intersection} we obtain
\[
 \dim F_k\cap F_{k+1}\leq q(i)+r(i)-(d+d')-2.
\]
On the other hand $\dim F_k\cap F_{k+1}=\dim \Delta -1=q(i)+r(i)-2$. Hence $d=d'=0$. The assertion follows by Lemma \ref{lem:strongpath}.
\end{proof}

\section{Sequentially Cohen-Macaulay mixed product ideals}\label{sec:SCM}
In this section we classify the sequentially Cohen-Macaulay mixed product ideals. We recall some definitions and results useful for our purpose and we continue to use the notation defined in section \ref{sec:Alex}.

\begin{Definition}
Let $K$ be a field, $S=K[x_1,\ldots,x_n]$ be a polynomial ring. A graded $S$-module is called {\em sequentially Cohen-Macaulay} (over $K$), if there exists a finite filtration of graded $S$-modules
\[
 0=M_0\subset M_1\subset \cdots\subset M_t=M
\]
such that each $M_i/M_{i-1}$ is Cohen-Macaulay, and the Krull dimensions of the quotients are increasing:
\[
 \dim(M_1/M_0)<\dim(M_2/M_1)< \cdots<\dim(M_{t}/M_{t-1}).
\]
\end{Definition}

\begin{Definition}
 Let $\Delta$ be a simplicial complex then we define the pure simplicial complexes $\Delta^{[l-1]}$  whose facets are
\[
 \FF(\Delta^{[l-1]})=\{F\in\Delta:\dim(F)=l-1\},\hspace{1cm} 0\leq l\leq \dim(\Delta)+1. 
\]
\end{Definition}

A fundamental result about sequentially Cohen-Macaulay Stanley-Reisner rings $K[\Delta]$ is the following

\begin{Theorem}[\cite{Du}]\label{th:duval}
 $K[\Delta]$ is sequentially Cohen-Macaulay if and only if $K[\Delta^{[l-1]}]$ is Cohen-Macaulay for  $0\leq l\leq \dim(\Delta)+1$.
\end{Theorem}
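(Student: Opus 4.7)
Since this theorem is attributed to Duval and cited as [Du], I would appeal to that reference for the complete proof. To sketch the approach, the natural framework is the \emph{dimension filtration} of $K[\Delta]$: set $M_l$ to be the largest graded submodule of $K[\Delta]$ of Krull dimension $\leq l$, producing an ascending chain
\[
0=M_{-1}\subset M_0\subset M_1\subset\cdots\subset M_d=K[\Delta],
\]
where $d=\dim\Delta+1$. My first step would be to verify the standard reformulation: $K[\Delta]$ is sequentially Cohen--Macaulay in the sense of the definition above if and only if each nontrivial quotient $M_l/M_{l-1}$ is Cohen--Macaulay of Krull dimension exactly $l$. Any sequentially CM filtration can be refined to the dimension filtration, so the two conditions coincide, and the problem reduces to analyzing the successive quotients $M_l/M_{l-1}$.

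The second step is the key computation: show that, as far as Cohen--Macaulayness is concerned, $M_l/M_{l-1}$ corresponds to the Stanley--Reisner ring $K[\Delta^{[l-1]}]$ of the pure $(l-1)$-skeleton. Using the primary decomposition $I_\Delta=\bigcap_{F\in\FF(\Delta)}P_F$ from \ref{eq:stanleyideal}, one isolates the contribution of the facets of each given dimension and matches it with the corresponding Stanley--Reisner ideal $I_{\Delta^{[l-1]}}=\bigcap_{F\in\FF(\Delta),\,\dim F=l-1}P_F$. Combining the two steps gives the desired equivalence between sequential Cohen--Macaulayness of $K[\Delta]$ and Cohen--Macaulayness of each $K[\Delta^{[l-1]}]$.

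The main obstacle I expect is precisely this matching. A priori $M_l/M_{l-1}$ sits inside $K[\Delta]$ and need not be literally isomorphic to $K[\Delta^{[l-1]}]$, so one must argue carefully---most cleanly through local cohomology or via Hochster's formula applied to the relevant $\operatorname{Ext}$ modules $\operatorname{Ext}^{n+m-l}_S(K[\Delta],S)$---that Cohen--Macaulayness of the filtration quotient is equivalent to Cohen--Macaulayness of the pure-skeleton ring. Once this comparison is in place, the theorem follows formally.
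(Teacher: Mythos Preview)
The paper does not prove this theorem at all: it is stated with the attribution \cite{Du} and used as a black box, with no proof or sketch following it. Your instinct to appeal to Duval's reference is therefore exactly what the paper does, and nothing more is required.

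Your sketch of the argument via the dimension filtration and the identification of the successive quotients with the pure-skeleton Stanley--Reisner rings is a reasonable outline of how Duval's result (and the closely related treatments by Schenzel and Stanley) actually goes, but it is extra content beyond what the paper supplies. If you want to keep it, be aware that the delicate point you flag is real: the quotient $M_l/M_{l-1}$ is not isomorphic to $K[\Delta^{[l-1]}]$ in general, and the equivalence of their Cohen--Macaulayness is established either through Reisner-type local cohomology computations or, in Duval's original paper, through algebraic shifting. Simply ``matching'' primary components as you describe is not by itself enough, so if you retain the sketch you should make clear that this step is where the substance of \cite{Du} lies.
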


\begin{Remark}\label{rem:subl}
Let $S=K[x_1,\ldots, x_n,y_1,\ldots,y_m]$, $I_\Delta=\sum_{i=1}^s  I_{q_i} J_{r_i}$ be a mixed product ideal, $K[\Delta]=S/I_\Delta$ and let $\FF(\Delta)$ be partitioned as shown in Proposition \ref{rem:partition}, that is $\FF(\Delta)=\FF_1(\Delta)\cup \ldots \cup \FF_{s'}(\Delta)$ such that
\begin{equation*}
\begin{split}
\FF_k(\Delta) = &  \{\{x_{i_1},\ldots, x_{i_{q(k)}},y_{j_1},\ldots, y_{j_{r(k)}}\}: \\
                &      1\leq i_1<\ldots<i_{q(k)}\leq n,\, 1\leq j_1<\ldots<j_{r(k)}\leq m\},
\end{split}
\end{equation*}
with $k=1,\ldots,s'$. If we set an $l$ with $0\leq l\leq \dim(\Delta)+1$ then for each $k\in\{1,\ldots, s'\}$ we have that $\FF_k(\Delta^{[l-1]})=\FF_{k1} \cup \ldots \cup \FF_{kt_{k}}$ where 
\begin{equation*}
\begin{split}
\FF_{kj} &= \{\{x_{i_1},\ldots, x_{i_{q_k(j)}},y_{j_1},\ldots, y_{j_{r_k(j)}}\}: \\
               &     1\leq i_1<\ldots<i_{q_k(j)}\leq n,\, 1\leq j_1<\ldots<j_{r_k(j)}\leq m\}  \mbox{ with } j=1,\ldots,t_k,
\end{split}
\end{equation*}
satisfies the following properties:
\begin{enumerate}
 \item $q_k(t_k)=\min\{q(k),l\}$,
 \item $r_k(1)=\min\{r(k),l\}$,
 \item $q_k(i)=q_k(i+1)-1$, $r_k(i+1)=r_k(i)-1$ for $i=1,\ldots,t_k-1$. 
\end{enumerate}
\end{Remark}

\begin{Definition}
  Let $\bar{q}=(q(1),\ldots,q(s'))$, $\bar{r}=(r(1),\ldots,r(s'))\in \ZZ^{s'}_{\geq 0}$, we define the following function $\sigma:\{1,\ldots,s'\}\rightarrow \ZZ_{\geq 0}$  
\[
\sigma(i)=q(i)+r(i). 
\]
\end{Definition}

\begin{Lemma}\label{lem:scm}
Let $S=K[x_1,\ldots, x_n,y_1,\ldots,y_m]$, $I_\Delta=\sum_{i=1}^s  I_{q_i} J_{r_i}$ be a mixed product ideal  and let $K[\Delta]=S/I_\Delta$.
If $K[\Delta]$ is sequentially Cohen-Macaulay then
\begin{enumerate}
 \item for all $i\in\{1,\ldots,s'-1\}$ either $q(i)=q(i+1)-1$ or $r(i)=r(i+1)+1$;
 \item there exists $k\in \{1,\ldots,s'\}$ such that $\sigma(1)\leq \sigma(2)\leq \ldots\leq \sigma(k)\geq \sigma(k+1)\geq \sigma(s')$.
\end{enumerate}
\end{Lemma}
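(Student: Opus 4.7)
The plan is to use Duval's theorem (Theorem \ref{th:duval}) to reduce the sequential Cohen-Macaulayness of $K[\Delta]$ to the Cohen-Macaulayness of $K[\Delta^{[l-1]}]$ for each $l\in\{0,\ldots,\dim\Delta+1\}$, and then to apply the criterion in Theorem \ref{theo:strconn}. The crucial observation is that $\Delta^{[l-1]}$ is again the complex of a mixed product ideal, because its facets are the $l$-subsets $\{x_{i_1},\ldots,x_{i_p},y_{j_1},\ldots,y_{j_{l-p}}\}$ contained in some facet of $\Delta$, namely those with $p\in P_l$ where
\[
P_l\;=\;\bigcup_{k:\,\sigma(k)\geq l}\bigl[\max(0,l-r(k)),\,q(k)\bigr]\cap\ZZ.
\]
Applying Theorem \ref{theo:strconn} to $\Delta^{[l-1]}$ (whose $\bar q'$-vector is the sorted set $P_l$), $K[\Delta^{[l-1]}]$ is Cohen-Macaulay if and only if $P_l$ consists of consecutive integers. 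Both (1) and (2) will then be proved by contradiction: assuming the conclusion fails, I would exhibit an admissible $l$ for which $P_l$ has a gap.

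For (1), suppose there exists $i\in\{1,\ldots,s'-1\}$ with $q(i+1)\geq q(i)+2$ and $r(i)\geq r(i+1)+2$, and set $l=q(i)+r(i+1)+2$. Then $l\leq\sigma(i+1)\leq\dim\Delta+1$, so $l$ is admissible. The $k=i$ interval has top $q(i)$ and is non-empty because $\sigma(i)-l=r(i)-r(i+1)-2\geq 0$, while the $k=i+1$ interval equals $[q(i)+2,q(i+1)]$. For $k\leq i$ the top $q(k)\leq q(i)$ lies strictly below $q(i)+1$, and for $k\geq i+1$ the bottom $l-r(k)\geq l-r(i+1)=q(i)+2$ lies strictly above $q(i)+1$. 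Hence $q(i)+1\notin P_l$, so $P_l$ is not consecutive.

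For (2), assume $\sigma$ is not unimodal, so there exist $i<j<k$ with $\sigma(i)>\sigma(j)$ and $\sigma(k)>\sigma(j)$; set $l=\sigma(j)+1$. Let $l^{\ast}=\max\{\ell\leq j:\sigma(\ell)\geq l\}$ and $l^{\ast\ast}=\min\{\ell\geq j:\sigma(\ell)\geq l\}$; both exist (witnessed by $i$ and $k$) and satisfy $l^{\ast}<j<l^{\ast\ast}$. The strict monotonicities $q(l^{\ast})<q(j)$ and $r(l^{\ast\ast})<r(j)$ yield
\[
l-r(l^{\ast\ast})-q(l^{\ast})\;=\;\bigl(q(j)-q(l^{\ast})\bigr)+\bigl(r(j)-r(l^{\ast\ast})\bigr)+1\;\geq\;3,
\]
so the $l^{\ast\ast}$-interval starts at least at $q(l^{\ast})+3$ while the $l^{\ast}$-interval ends at $q(l^{\ast})$. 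For indices $\ell\in(l^{\ast},l^{\ast\ast})$ one has $\sigma(\ell)<l$, so the corresponding intervals are empty; for $\ell<l^{\ast}$ the top $q(\ell)<q(l^{\ast})$; and for $\ell>l^{\ast\ast}$ the bottom $l-r(\ell)>l-r(l^{\ast\ast})\geq q(l^{\ast})+3$. Therefore $q(l^{\ast})+1\notin P_l$, contradicting Cohen-Macaulayness of $K[\Delta^{[l-1]}]$.

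The main obstacle is the bookkeeping in (2): the strict gap at $q(l^{\ast})+1$ depends on carefully exploiting the strict monotonicities of both $\bar q$ and $\bar r$ rather than just the weak monotonicity of $\sigma$, and on verifying that no other $\ell$ with $\sigma(\ell)\geq l$ contributes an interval covering $q(l^{\ast})+1$. The admissibility of the chosen levels $l$, as well as the identification of $\Delta^{[l-1]}$ with the complex of the mixed product ideal $\sum_{p\in P_l}I_pJ_{l-p}$ so that Theorem \ref{theo:strconn} truly applies, are routine but must be checked.
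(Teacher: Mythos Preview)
Your proof is correct. Both you and the paper argue by contradiction via Duval's theorem, exhibiting a level $l$ at which $K[\Delta^{[l-1]}]$ fails to be Cohen--Macaulay, but the mechanics differ. The paper works directly with strong connectedness (the implication (3)$\Rightarrow$(4) of Theorem~\ref{theo:strconn}) and the intersection bound of Lemma~\ref{lem:intersection}: for (1) it takes $l=\min\{\sigma(k),\sigma(k+1)\}$ and shows that any facet on the ``$\leq k$'' side meets any facet on the ``$\geq k+1$'' side in dimension strictly below $l-2$; for (2) it takes $l=\min\{\sigma(k^-),\sigma(k^+)\}$ at a local minimum $k$ and notes that $\FF_k(\Delta^{[l-1]})$ is empty while both flanks survive, again forcing disconnection. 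You instead package the facets of $\Delta^{[l-1]}$ by their $x$-count into the set $P_l$, identify $\Delta^{[l-1]}$ as a mixed product complex (exactly the identification the paper spells out in Remark~\ref{rem:subl} and uses for the converse direction in Theorem~\ref{theo:SCMqr}), and then invoke the full criterion (1)$\Leftrightarrow$(3) of Theorem~\ref{theo:strconn} to reduce to showing $P_l$ is not an interval; your choices $l=q(i)+r(i+1)+2$ and $l=\sigma(j)+1$ are different from the paper's but make the gap in $P_l$ transparent. Your route is a bit more book-keeping heavy (you must track the endpoints $\max(0,l-r(k))$ and $\min(l,q(k))$ carefully, though in your chosen levels the $\min$ with $l$ never bites), while the paper's is more geometric; either way the argument goes through.
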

\begin{proof}
 If $K[\Delta]$ is sequentially Cohen-Macaulay then $K[\Delta^{[l-1]}]$ is Cohen-Macaulay for $0\leq l\leq \dim \Delta+1$. Hence $\Delta^{[l-1]}$ is strongly connected for $0\leq l\leq \dim \Delta+1$. We observe that if we negate property (1) (resp. (2)) we find an $l$ such that $\Delta^{[l-1]}$ is not strongly connected.

 (1) We suppose that there exists $k$ with $1\leq k\leq s'-1$ such that 
\[
 q(k)<q(k+1)-1\mbox{ and } r(k)>r(k+1)+1.
\]
Let $l=\min\{\sigma(k), \sigma(k+1)\}$ and we assume that $l=\sigma(k)$. We observe that
\begin{equation}\label{eq:subpar}
 \FF(\Delta^{[l-1]})=\bigcup_{i=1}^{s'} \FF_i (\Delta^{[l-1]})
\end{equation}
where the union is not disjoint. Since $l=\sigma(k)\leq \sigma(k+1)$ we have $\FF_k (\Delta^{[l-1]})=\FF_k (\Delta)$ and $\FF_{k+1} (\Delta^{[l-1]})\neq \emptyset$ .

We show that  for all $F\in\FF_i(\Delta)$ with $\FF_i(\Delta^{[l-1]})\neq \emptyset$ and for all $G\in\FF_j(\Delta)$ with  $\FF_j(\Delta^{[l-1]})\neq \emptyset$ with $1\leq i\leq k <j\leq s'$ we have that
\begin{equation}\label{diseq:dim}
\dim F\cap G < \dim \Delta^{[l-1]}-1.  
\end{equation}
If the inequality \ref{diseq:dim} is satisfied also the facets $F'\subset F$ with $F'\in \FF_i(\Delta^{[l-1]})$ and  $G'\subset G$ with $G'\in \FF_j(\Delta^{[l-1]})$ inherit this property. 

Hence $\Delta^{[l-1]}$ is not strongly connected.

By Lemma \ref{lem:intersection}  we have that $\dim F\cap G\leq q(i)+r(j)-1 \leq q(k)+r(j)-1$. Since $r(k)>r(k+1)+1$ we have $q(k)+r(k)>q(k)+r(k+1)+1$ that is 
\[
l-2= q(k)+r(k)-2>q(k)+r(j)-1\geq \dim F\cap G.
\]
The case $l=\sigma(k+1)$ follows by similar arguments.

(2) We suppose that there exist $k$, $k^-$, $k^+$, with $1\leq k^-<k<k^+\leq s'$ such that 
\[
 \sigma(k^-) > \sigma(k)< \sigma (k^+).
\]
Let $l=\min\{\sigma (k^-),\sigma(k^+)\}$ and we assume that $l=\sigma(k^-)$. 

Hence $\FF_{k^-} (\Delta^{[l-1]})=\FF_{k^-} (\Delta)$ and, since $l\leq \sigma(k^+)$, $\FF_{k^+} (\Delta^{[l-1]})\neq \emptyset$. By Lemma \ref{lem:intersection} and using similar arguments of (1) it is easy to show that, for all $F\in\FF_i(\Delta)$ with $\FF_i(\Delta^{[l-1]})\neq \emptyset$ and  for all $G\in\FF_j(\Delta)$ with  $\FF_j(\Delta^{[l-1]})\neq \emptyset$ with $1\leq i< k <j\leq s'$,
\[
\dim F\cap G \leq q(i)+r(j)-1< q(k)+r(k)-1< l-1.  
\]
The assertion follows since $\FF_{k} (\Delta^{[l-1]})=\emptyset$.
\end{proof}

We come to the main result of this section.
\begin{Theorem}\label{theo:SCMqr}
Let $S=K[x_1,\ldots, x_n,y_1,\ldots,y_m]$, $I_\Delta=\sum_{i=1}^s  I_{q_i} J_{r_i}$ be a mixed product ideal  and let $K[\Delta]=S/I_\Delta$.
The following conditions are equivalent:
\begin{enumerate}

 \item $K[\Delta]$ is sequentially Cohen-Macaulay.
 \item The following conditions hold:
\begin{enumerate}
 \item $q(i)=q(i+1)-1$ or $r(i)=r(i+1)+1$ with $i=1,\ldots,s'-1$;
 \item there exists $k\in \{1,\ldots,s'\}$ such that $\sigma(1)\leq \sigma(2)\leq \ldots\leq \sigma(k)\geq \sigma(k+1)\geq \sigma(s')$.
\end{enumerate}
 \end{enumerate}
\end{Theorem}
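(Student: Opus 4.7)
The plan is to split the argument into the two implications. The direction $(1)\Rightarrow(2)$ is already Lemma \ref{lem:scm}, so the real content is $(2)\Rightarrow(1)$. For that I will invoke Duval's theorem \ref{th:duval}, reducing the problem to showing that $K[\Delta^{[l-1]}]$ is Cohen--Macaulay for every $0\leq l\leq \dim(\Delta)+1$.

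The crucial observation is that $\Delta^{[l-1]}$ is again the Stanley--Reisner complex of a mixed product ideal. Indeed, by Remark \ref{rem:subl}, its facets are exactly the subsets $\{x_{i_1},\ldots,x_{i_q},y_{j_1},\ldots,y_{j_{l-q}}\}$ whose $x$-count $q$ runs over the set
\[
Q_l \;=\; \bigcup_{k=1}^{s'}\bigl[\max(0,\,l-r(k)),\;\min(q(k),\,l)\bigr].
\]
Since $q+r=l$ is constant on the facets of $\Delta^{[l-1]}$, Theorem \ref{theo:strconn} specializes to the assertion that $K[\Delta^{[l-1]}]$ is Cohen--Macaulay if and only if $Q_l$ is an interval of consecutive integers. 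Thus the whole problem reduces to proving that (a) and (b) together force $Q_l$ to have no gaps for every $l$.

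For this I first note that block $k$ contributes a non-empty interval exactly when $\sigma(k)\geq l$, so by the unimodality from (b) the contributing indices form a consecutive range $\{k^-,\ldots,k^+\}\subseteq\{1,\ldots,s'\}$. It then suffices to check, for each pair of consecutive indices $k,k+1$ in this range, the no-gap inequality
\[
\max(0,\,l-r(k+1))\;\leq\;\min(q(k),l)+1.
\]
I will verify this by splitting on which alternative of (a) holds. In the case $q(k+1)=q(k)+1$ the non-emptiness bound $\sigma(k+1)\geq l$ rearranges to $q(k)+r(k+1)\geq l-1$, which is precisely what is needed when $q(k)\leq l$ and $r(k+1)<l$; the symmetric case $r(k+1)=r(k)-1$ uses $\sigma(k)\geq l$ instead. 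Boundary cases where the $\min$ is attained at $l$ or the $\max$ is attained at $0$ are then immediate.

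The main obstacle I anticipate is the bookkeeping around the truncations built into $\min(q(k),l)$ and $\max(0,l-r(k+1))$, namely the subcases where one of the blocks overshoots $l$ in either coordinate. Once those subcases are dispatched uniformly using the parity of conditions (a) and (b) with the non-emptiness inequality $\sigma(k),\sigma(k+1)\geq l$, the chain closes: $Q_l$ is an interval, so Theorem \ref{theo:strconn} yields Cohen--Macaulayness of $K[\Delta^{[l-1]}]$, and Duval's theorem then gives that $K[\Delta]$ is sequentially Cohen--Macaulay.
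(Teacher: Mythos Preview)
Your proposal is correct and follows essentially the same route as the paper: both directions invoke Lemma~\ref{lem:scm} and Duval's Theorem~\ref{th:duval}, both use Remark~\ref{rem:subl} to describe the facets of $\Delta^{[l-1]}$, and both reduce via Theorem~\ref{theo:strconn} to checking that the set of admissible $x$-counts is a run of consecutive integers. Your packaging of this last step as ``$Q_l$ is an interval'' and the verification via the no-gap inequality between adjacent contributing blocks is exactly the content of the paper's case split on $j<t_k$ versus $j=t_k$, just phrased more globally; the unimodality in (b) is used in both arguments to guarantee that block $k{+}1$ contributes whenever blocks $k$ and some $k'>k$ do.
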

\begin{proof}
(1)$\Rightarrow$(2).  See Lemma \ref{lem:scm}.

(2)$\Rightarrow$(1). We need to show that for all $l$ with $0\leq l\leq \dim \Delta+1$ we have $K[\Delta^{[l-1]}]$ is Cohen-Macaulay.
Let $\Delta'=\Delta^{[l-1]}$, by Remark \ref{rem:subl} we have that 
\begin{equation}\label{eq:par}
 \FF(\Delta')=\bigcup \FF_{kj}\mbox{ with }k=1,\ldots,s',j=1,\ldots,t_k,
\end{equation}
where
\[
 \FF_{kj}\cap \FF_{k'j'}=\left\{\begin{array}{cl}
	    \FF_{kj}=\FF_{k'j'} & \mbox{ if } q_k(j)=q_{k'}(j')\\
	    \emptyset           & \mbox{ if } q_k(j)\neq q_{k'}(j')
            \end{array}
    \right.
\]
for all $k,k'\in \{1,\ldots,s'\}$, $j=1,\ldots,t_k$, $j'=1,\ldots,t_{k'}$.
If we remove the redundant elements in \ref{eq:par} and sort the remaining ones in an increasing order by $q_k(j)$ with $k=1,\ldots,s'$ and $j=1,\ldots,t_k$, we obtain a partition, 
with $\bar{q}'=(q'(1),\ldots,q'(t'))$, $\bar{r}'=(r'(1),\ldots,r'(t'))$ and $q'(i)<q'(i+1)$ for $i=1,\ldots,t'-1$.
Since $\Delta'$ is pure by definition, it is sufficient to show that $q'(i+1)=q'(i)+1$ for $i=1,\ldots,t'-1$ by Theorem \ref{theo:strconn}.

Let $q'(i)$ be an entry of the vector $\bar{q}'$ with $i=1,\ldots,t'-1$, then $q'(i)<l$ and there exists $q_k(j)$ related to \ref{eq:par} with $q'(i)=q_k(j)$ with $k=1,\ldots,s'$ and $j=1,\ldots,t_k$.

If $j<t_k$ by property (3) of Remark \ref{rem:subl} we are done. If $j=t_k$ this implies that in the partition induced by \ref{eq:par} there exists $k'>k$ such that $\FF_{k'}(\Delta')\neq \emptyset$. Hence by the condition (1.b), $\sigma(k+1)\geq \min\{\sigma(k),\sigma(k')\}\geq l$, therefore $\FF_{k+1}(\Delta')\neq \emptyset$. By condition (1.a), if $q(k+1)=q(k)+1$  and by property (1) of Remark \ref{rem:subl} we have $q_{k+1}(t_{k+1})=q(k)+1$. If $q(k+1)\neq q(k)+1$ then $r(k+1)=r(k)-1$ and this implies by property (2) of Remark \ref{rem:subl} that $r_{k+1}(1)=r(k)-1$, hence $q_{k+1}(1)=l-(r(k)-1)=q(k)+1$.

\end{proof}

\end{document}